\newcommand{\evnrow}{\rowcolor[gray]{0.95}}
\newcommand{\QQ}{\mathbb{Q}}
\newcommand{\mQ}{\mathcal{Q}}
\newcommand{\mF}{\mathcal{F}}
\newcommand{\PP}{\mathbb{P}}
\newcommand{\of}{\mathcal{O}}
\DeclareMathOperator{\Gr}{Gr}
\DeclareMathOperator{\Bl}{Bl}
\newtheorem{thm}{Theorem}[section]
\newtheorem{proposition}[thm]{Proposition}
\newtheorem*{aim*}{Aim of this paper}
\theoremstyle{definition}
\def\l@subsection{\@tocline{1}{0,2pt}{2pc}{8mm}{\ \ }} 
\def\l@section{\@tocline{1}{0,2pt}{2pc}{8mm}{\ \ }}
\author{Enrico Fatighenti}
\address{Dipartimento di Matematica\\
Universit\`a di Bologna\\
 Piazza di Porta San Donato 5, 40126 Bologna, Italy.}
\email[E.~Fatighenti]{enrico.fatighenti@unibo.it}
\title[K3 structures from singular Fano varieties]{K3 structures from singular Fano varieties}
\begin{document}
\vspace{-0.3cm}

\begin{abstract}
We survey some results obtained in our quest for Fano varieties of K3 type and discuss why exploring the singular world might be interesting for discovering new K3 structures.
\end{abstract}
%\vspace{-0.3cm}

\maketitle

\vspace{-1cm}
\section{Introduction}
Fano varieties of K3 type (FK3 for short), are a very special class of smooth projective varieties that lies at the crossroad of many different topics in algebraic geometry, e.g. \emph{rationality questions} and \emph{hyperk\"ahler geometry}. In short, their peculiar name comes from the fact that they contain a Hodge-theoretical ``core'' that looks like the one of a K3 surface. We refer to \cite{survey} (and references therein) for a survey of their properties and recent advances in their classification. In \cite{bfmt}, we produced a list of 64 new FK3 fourfolds in homogeneous variety, as zero loci of suitable vector bundles. We believe that a close analysis of these variety could lead to many interesting constructions\footnote{It is worth noting that \textbf{K3-35} appeared very recently in connection with a hyperk\"ahler 6-fold associated to an EPW cube, see \cite{kkm} and also \cite{bfk+24}.}. Our analysis in \emph{op.cit.} lead us to the study of certain singular Fano fourfolds, whose resolution is a FK3. In this short survey we recap some result on the topic and give a glimpse of what our future directions may be.

\vspace{-0.5cm}

\section{K3 structures from singular varieties}

\subsection{FK3 fourfolds with singular projections}
Many of the FK3 fourfolds constructed in \cite{bfmt} - of which we follow the notations - admit a fairly simple birational geometry: they are in fact blow up of either a cubic or a Gushel-Mukai fourfold, or to other fairly simple Fano varieties with center a K3 surface. There are four exceptions, which are fairly mysterious and with a potentially rich geometry, both from the rationality and the hyperk\"ahler point of view. In our paper we named them as \textbf{R-61, R-62, R-63, R-64} - we invite the interested reader to look in our paper the exact vector bundles (of rank 6,9,8,6) that defines them.
All these varieties admit a map towards a smaller dimensional projective space which is either a DP$_5$ fibration or a conic bundle, together with another natural projection, which is birational towards a singular fourfold. In some sense, these singular fourfolds must therefore be a \emph{primitive} equivalent (albeit not \emph{stricto sensu}) of the cubic and GM-fourfolds, and we were lead to their analysis.
Other FK3 fourfolds where one projection is towards a singular fourfolds are \textbf{C-5, C-6, C-8, K3-27, K3-55}.
The most common singular fourfold involved is a complete intersection of three quadrics, in some special configurations. We will focus on this case for the rest of this survey.

\vspace{-0.1cm}
\subsection{The (singular) complete intersection of three quadrics}
Let us focus on the case of a singular complete intersection of three quadrics $\QQ= Z(Q_1, Q_2, Q_3) \subset \PP^{2n+1}$, with $n=2,3$. The singular locus of each quadric $Q_i$ coincides with $\PP(\ker(Q_i))$. We will do an extra simplifying assumption, i.e. that $\ker(Q_i)$, whenever non-empty, coincide. This assumption is in fact restrictive, but will allow us to easily realize $Y=\Bl_{\mathrm{sing} (\QQ)} \QQ$ as $(G, \mF)$, and therefore to compute their Hodge structure using the tools of \cite{dft, bfmt}. For example, \textbf{C-8} will \emph{not} satisfy this assumption.

In fact, it can be described as the blow up of $\QQ$, singular on a line, hence as a subvariety of $\PP^5 \times \PP^7$, cut by $\mQ_{\PP^5}(0,1) \oplus \of(1,1) \oplus \mF$, where $\mF$ is globally generated, has rank 2 and $c_1(\mF)=(3,1)$. It is not split (i.e., $\of(2,0) \oplus \of(1,1)$), since the resulting Fano in this case would not be of K3 type (in fact, having an $H^4$ with dimension vector $(2,32,2)$).

It is not important here that $Y$ itself is a Fano variety: the main purpose of this short note is to highlight how a Hodge structure of a smooth Fano variety which is \emph{not} a FK3 can degenerate enough to become a K3-structure when the resulting singular variety $\QQ$ is resolved. The variety $Y$ is therefore just an auxiliary object, that allows us to detect where these \emph{FK3-degenerations} might be.

In the table below, we will write down which are the $Y$ in the above hypotheses, in dimension 4 and 6, encoding the degeneration data as well. $Y$ will be described in the ambient variety $G$ as $Y=(G, \mF)$, with $\mF$ being the direct sum of $\mQ(0,1)$ together with the three quadrics of specified type. Notice how in general a quadric of type $(1,1)$ will contain the singular locus, while a quadric of type $(2,0)$ will cut it. We also avoid the case where all $Q_i$ are singular, to avoid cases where $\QQ$ is simply a cone over a lower-dimensional intersection of quadrics.

\begin{longtable}{cccccc}
\caption{4 dimensional $Y$ of K3 type}\label{tab:Y4}\\
\toprule
\multicolumn{1}{c}{G}&\multicolumn{1}{c}{$\dim(\ker(Q_i))$}& \multicolumn{1}{c}{$Q_i$-type} & \multicolumn{1}{c}{sing$(\QQ)$} &\multicolumn{1}{c}{$H^4 \neq 0$}& \multicolumn{1}{c}{Fano} \\
\cmidrule(lr){1-1}\cmidrule(lr){2-2}\cmidrule(lr){3-3} \cmidrule(lr){4-4} \cmidrule(lr){5-5} \cmidrule(lr){6-6} \endfirsthead
\multicolumn{5}{l}{\vspace{-0.25em}\scriptsize\emph{\tablename\ \thetable{} continued from previous page}}\\
\midrule
\endhead
\multicolumn{5}{r}{\scriptsize\emph{Continued on next page}}\\
\endfoot
\bottomrule
\endlastfoot
\evnrow $\PP^6 \times \PP^7$&$(1,1,0)$&$(2,0), (2,0), (1,1)$&$p$&$(1,28,1)$& K3-27\\
$\PP^5 \times \PP^7$&$(2,2,0)$&$(2,0), (2,0), (0,2)$&$2p$&$(1,24,1)$& R-63\\
\evnrow $\PP^4 \times \PP^7$&$(3,3,0)$&$(2,0), (2,0), (0,2)$&$v_2(\PP^1)$&$(1,22,1)$&  $\times $\\
$\PP^3 \times \PP^7$&$(4,0,0)$&$(2,0), (0,2), (1,1)$&$\PP^1 \times \PP^1$&$(1,22,1)$& $\times$\\

\end{longtable}
\vspace{-0.2cm}
\begin{longtable}{cccccc}
\caption{6-dimensional $Y$ of K3 type}\label{tab:Y4}\\
\toprule
\multicolumn{1}{c}{G}&\multicolumn{1}{c}{$\dim(\ker(Q_i))$}& \multicolumn{1}{c}{$Q_i$-type} & \multicolumn{1}{c}{sing$(\QQ)$} &\multicolumn{1}{c}{$H^4 \neq 0$}& \multicolumn{1}{c}{Fano} \\
\cmidrule(lr){1-1}\cmidrule(lr){2-2}\cmidrule(lr){3-3} \cmidrule(lr){4-4} \cmidrule(lr){5-5} \cmidrule(lr){6-6} \endfirsthead
\multicolumn{5}{l}{\vspace{-0.25em}\scriptsize\emph{\tablename\ \thetable{} continued from previous page}}\\
\midrule
\endhead
\multicolumn{5}{r}{\scriptsize\emph{Continued on next page}}\\
\endfoot
\bottomrule
\endlastfoot
\evnrow $\PP^7 \times \PP^9$&$(2,2,0)$&$(2,0), (2,0), (1,1)$&$\PP^1$&$(1,22,1)$& \checkmark $(\iota_Y=2)$\\
$\PP^5 \times \PP^9$&$(4,4,0)$&$(2,0), (2,0), (0,2)$&$\PP^1 \times \PP^1$&$(1,24,1)$&\checkmark $(\iota_Y=1)$ \\
\evnrow $\PP^4 \times \PP^9$&$(5,5,0)$&$(2,0), (2,0), (0,2)$&$Q^3$&$(1,22,1)$&  $\times$\\
$\PP^3 \times \PP^9$&$(6,0,0)$&$(2,0), (0,2), (1,1)$&$\Gr(2,4)$&$(1,22,1)$& $\times$\\

\end{longtable}

Birational description can be given in the 6-fold case as well. Consider the first case as an example: we can equivalently describe $Y$ as $Y=(\PP_Z(\of(-1)^2 \oplus \of(-2), \of_P(1))$, where $Z \subset \PP^7$ is given by the smooth complete intersection of two quadrics. The projection $Y \to Z$ is generically a $\PP^1$-bundle, with special fibers $\PP^2$ over an octic K3 surface $S$. Hence, by the so-called \emph{Cayley trick} we can describe both the Hodge structure and the derived category of $Y$ in terms of the one of $S$, see \cite[Prop. 46]{bfm}.

\subsubsection{Singular $(2,2)$ divisors in $\PP^2 \times \PP^3$ and rationality properties}
There is an interesting connection from the rationality viewpoint between the complete intersection of three quadrics $\QQ \subset \PP^7$ and a divisor $X$ of bidegree $(2,2)$ in $\PP^2 \times \PP^3$. Both may be either rational or irrational, and the rational
ones are dense in moduli. The similarities do not end here, since both varieties are birational to a quadric surface bundle over $\PP^2$ ramified over an octic curve, and they both (birationally) specialize to a singular Fano fourfold. These phenomena are thoroughly studied in \cite{hpt18a, hpt18b}. Unsurprisingly, a FK3 4-fold birational to a (singular) $X$ appears in our list of FK3 \cite{bfmt}, namely, \textbf{R-62}. Its rationality properties are unknown, as in the case of \textbf{C-8, R-63} (for which we know that their derived category contain a copy of the category of a K3 surface, together with a Brauer twist in the first case). On the other hand, \textbf{K3-27} is always rational, being birational to the complete intersection of two quadrics.
This shows how the behaviour of \emph{singular} $\QQ$ might differ a lot from the smooth case: it would be then interesting to determine the rationality properties of the above 4-folds, especially in light of Kuznetsov's conjecture for cubic and Gushel-Mukai fourfolds, see, e.g. \cite{kp18}.

\vspace{-0.1cm}
\subsection{An example}
We will now focus on a specific example appearing above, with the aim of giving a geometric interpretation to our \emph{singular K3 structure} in a specific case. In order to do this, let us focus on one of the case appearing above, namely \textbf{K3-27} in the notation of \cite{bfmt}. Again, this can be realized as $X=(\PP^6 \times \PP^7, \mQ_{\PP^6}(0,1) \oplus \of(2,0)^{\oplus 2} \oplus \of(1,1))$. In other words, we are considering the complete intersection $\QQ \subset \PP^7$ of three quadrics $Q_1, Q_2, Q_3$, such that $\mathrm{sing (Q_1)}=\mathrm{sing (Q_2)}=p$ and $Q_3$ is smooth, and we are blowing up $\Bl_p \QQ$.
The cohomology of the complete intersection of three quadrics in an odd-dimensional projective space  $\PP^{2n+1}$ is a classical topic: the original argument by O'Grady can be adapted to the singular case as well, see \cite{k1,k2}. In general, consider the net $\Lambda \cong \PP^2$ of quadrics induced by $\lambda_1 Q_1+ \lambda_2 Q_2+ \lambda_3 Q_3$. We can consider a divisor $D \subset \PP^2$ of degree $2n+2$ parametrizing the singular quadrics in the net. We can consider the double cover $S \to \PP^2$ branched along $D$: this is a surface of general type whose $H^2$ is isomorphic to $H^{2n}(\QQ)$.

The same argument can be modified to our specific net of quadrics, with the caveat that of course $D$ here will not be necessarily smooth, or even irreducible. In particular, without loss of generality we can assume $p=v_0$: our quadrics will be written as:
\[
Q_1= \sum_{i,j=1}^7 q^1_{i,j} x_ix_j; \ \ Q_2= \sum_{i,j=1}^7 q^2_{i,j} x_ix_j; \ \ \ Q_3= \sum_{i,j=0}^7 q^3_{i,j} x_ix_j, \ q^3_{0,0}=0.
\]

The equation for $D$ is in general given by $\det(M)$, where (up to multiplying everything by 2) $M_{i,j}= \lambda_1 q^1_{i,j}+\lambda_2 q^2_{i,j}+\lambda_3 q^3_{i,j}$. We have the following immediate result.
\begin{proposition} The branch divisor $D \subset \Lambda$ associated to the above $\QQ$ is the reducible union of a double line $L$ and a smooth sextic $C$.
\end{proposition}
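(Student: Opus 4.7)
The plan is to compute $\det M(\lambda)$, where $M(\lambda)=\lambda_1 A_1+\lambda_2 A_2+\lambda_3 A_3$ is the Gram matrix of the net $\Lambda$, by exploiting the block structure imposed by the common singular point $p$.

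First I would normalize $p=v_0=[1:0:\cdots:0]$ and translate the assumptions into matrix conditions. Since $p\in\mathrm{sing}(Q_1)\cap\mathrm{sing}(Q_2)$, the first row and column of $A_1$ and $A_2$ vanish identically; since $p\in Q_3$, one has $q^3_{0,0}=0$. Hence
\[
M(\lambda)=\begin{pmatrix}0 & \lambda_3\,v^{T}\\ \lambda_3\,v & N(\lambda)\end{pmatrix},
\]
where $v=(q^3_{0,1},\dots,q^3_{0,7})^{T}$ and $N(\lambda)$ is the symmetric $7\times 7$ lower block, linear in $\lambda$.

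The next step is the Schur complement identity applied to this block form: extending by continuity from the case in which $N(\lambda)$ is invertible, one obtains the polynomial identity
\[
\det M(\lambda)=-\lambda_3^{2}\;v^{T}\,\mathrm{adj}(N(\lambda))\,v.
\]
This immediately factors out the double line $L=\{\lambda_3=0\}$ from $D=\{\det M=0\}$; the geometric reason for the multiplicity being exactly $2$ is that every quadric parametrized by $L$ retains $p$ as a corank-$1$ singularity, so $\det M$ vanishes to order at least $2$ along $L$, and generically no higher. The residual factor
\[
C(\lambda)\;:=\;v^{T}\,\mathrm{adj}(N(\lambda))\,v
\]
is homogeneous of degree $6$, because $\mathrm{adj}(N(\lambda))$ has entries of degree $6$ in $\lambda$; set-theoretically its support parametrizes members of the net which are singular at a point distinct from $p$.

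The only step that is not a direct computation is the smoothness of $C$. The plan is to argue by Bertini: letting $Q_1,Q_2$ vary among quadrics with apex $p$ and $Q_3$ among smooth quadrics through $p$, the sextics $v^{T}\,\mathrm{adj}(N(\lambda))\,v$ span a sublinear system of $|\mathcal{O}_{\PP^{2}}(6)|$ whose base locus is at most finite, so the general member is smooth. This is the main obstacle, since one has to verify that no structural relation forces every sextic produced this way to be singular; in the concrete example defining $\textbf{K3-27}$ one would instead exhibit a direct Jacobian computation showing that $\nabla C$ and $C$ have no common zero.
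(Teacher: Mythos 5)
Your proposal is correct and reaches the same factorization $\det M=\lambda_3^2\cdot(\text{sextic})$ that the paper obtains, but it packages the computation differently. The paper performs an iterated Laplace expansion: expanding $\det M$ along the first row (whose entries are $q^3_{0,i}\lambda_3$, with the corner entry zero) and then each resulting minor along its first column pulls out one factor of $\lambda_3$ at each stage. You instead invoke the block-determinant (Schur complement) identity, which does the same bookkeeping in one stroke and has the advantage of producing a closed formula $C(\lambda)=v^{T}\,\mathrm{adj}(N(\lambda))\,v$ for the residual sextic, together with a clean geometric reading of the multiplicity-two line (every quadric on $L$ keeps $p$ as a singular point). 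On the remaining point --- irreducibility and smoothness of $C$ --- you are at essentially the same level of rigor as the paper, which simply asserts that the degree-$6$ factor is \emph{generically} irreducible with smooth zero locus; be aware, though, that your Bertini formulation is slightly loose, since the sextics $v^{T}\,\mathrm{adj}(N(\lambda))\,v$ form a nonlinear family rather than a linear system (Bertini applies to the linear span, and a finite nonempty base locus could still carry singularities of the general member), so the honest route is either the explicit Jacobian check you mention or exhibiting a single smooth member and invoking openness of smoothness in the family.
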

\begin{proof} It is a simple application of Laplace's rule for the computation of determinants. In fact, by our special choice of quadrics, we will have $M_{0,0}=0$ and $M_{0, i}= M_{i,0}= q^3_{0,i} \lambda_3$. Expanding using the first row, we have $\det(M)= \sum_i (-1)^i q^3_{0,i} \lambda_3 \det(M^i)$, with $M^i$ denoting the matrix of cofactors. On the other hand, each of $\det(M^i)$ can be further expanded using the first column, which once again has only terms in $\lambda_3$. In particular, since $q^3_{0,0}=0$, each monomial in $\det(M^i)$ will contain a $\lambda_3$. Hence, we can write $\det(M)=\lambda_3^2 f_6(\lambda_1, \lambda_2, \lambda_3)$, with the degree 6 polynomial generically irreducible (and the corresponding curve smooth).
\end{proof}

The above Proposition gives a geometric explanation of where the K3 structure in $\QQ$ comes from, namely the sextic curve $C \subset \Lambda$, as of course the double cover of $\PP^2$ ramified in $C$ is a degree 2 K3 surface. In general, it is not hard to imagine that all other degenerations of $\QQ$ producing a K3 structure on the resolution will come from a similar situation. Classifying such \emph{FK3-degenerations} becomes then an interesting problem \emph{per se}, especially if one would be able to associate to a FK3-degeneration a projective family of hyperk\"ahler manifolds. What is even more interesting of course would be to produce other examples of FK3-degenerations where no actual K3 surfaces are involved, and possibly construct family of hyperk\"ahler from them. This, however, goes well beyond the scope of this short survey.

\vspace{-0.5cm}
\thispagestyle{empty}

%singular K3 structures
%\section{Tables}

\frenchspacing

%\bibliographystyle{alphaabbr}
%\bibliography{morimukaibiblio}

\begin{thebibliography}{alpha}
%
%

	\bibitem[BFK+24]{bfk+24}

M.~Bernardara, E.~Fatighenti, G.~ Kapustka, M.~Kapustka, L.~Manivel, G.~ Mongardi \& F. Tanturri, F.
\newblock Even nodal surfaces of K3 type. 
\newblock {\em arXiv preprint arXiv:2402.08528}
	
	\bibitem[BFM21]{bfm}
	M.~Bernardara, E.~Fatighenti, and L.~Manivel.
	\newblock Nested varieties of {K3} type.
	\newblock {\em Journal de l'{\'E}cole polytechnique -- Math\'ematiques}, Tome 8 (2021), pp. 733-778.
	




%	
	\bibitem[BFMT21]{bfmt}
	M.~Bernardara, E.~Fatighenti, L.~Manivel and F.~Tanturri.
	\newblock Fano fourfolds of K3 type.
	\newblock arXiv preprint arXiv:2111.13030 (2021).
	
	
	
	
	\bibitem[DFT21]{dft}
L. De Biase, E. Fatighenti and F. Tanturri. 
\newblock Fano 3-folds from homogeneous vector bundles over Grassmannians.
\newblock {\em Rev. Math. Complut.}, 35(3), 649-710.

\bibitem[Fa22]{survey}
E. Fatighenti. 
\newblock Topics on Fano varieties of K3 type.
\newblock To appear in {\em Proceedings of Nottingham AG Seminar.}

\bibitem[HPT18a]{hpt18a}
B. Hassett, A. Pirutka, and Y. Tschinkel.
\newblock Stable rationality of quadric surface bundles over surfaces. 
\newblock {\em Acta Mathematica} (2018), 220(2), 341-365.


\bibitem[HPT18b]{hpt18b}
B. Hassett, A. Pirutka, and Y. Tschinkel.
\newblock Intersections of three quadrics in $\mathbb {P}^ 7$. 
\newblock {\em Surveys in Differential Geometry}, (2017) 22(1), 259-274.


\bibitem[KKM22]{kkm}
G. Kapustka, M, Kapustka and G. Mongardi.
\newblock EPW sextics vs EPW cubes.
\newblock {\em arXiv:2202.00301 }.

\bibitem[KP18]{kp18}
	A. Kuznetsov and A. Perry.
	\newblock Derived categories of Gushel-Mukai varieties. 
	\newblock {\em Compositio Mathematica}, (2018) 154(7), 1362-1406.
	
	\bibitem[OG86]{k1}
	K.G. O'Grady.
	\newblock The Hodge structure of the intersection of three quadrics in an odd dimensional projective space. 
	\newblock {\em Mathematische Annalen}, 273(2), 277-285, (1986).
	
	\bibitem[OG21]{k2}
	K.G. O'Grady.
	\newblock \emph{Sanniti} seminar series. 
\end{thebibliography}

\newcommand{\etalchar}[1]{$^{#1}$}

\end{document}